 \numberwithin{equation}{section} 
\theoremstyle{plain}
\newtheorem{thm}{Theorem}[section]
\newtheorem{lem}[thm]{Lemma}
\newtheorem{ex}[thm]{Example}
\newtheorem{de}[thm]{Definition}
\newtheorem{ques}[thm]{Question}
\def\R {{\Bbb R}}
\def\N {{\Bbb N}}
\def\Z {{\Bbb Z}}
\def\Q {{\mathcal Q}}
\def\D {{\mathcal D}}
\newcommand{\diam}{\mathrm{diam}}
\DeclareMathOperator{\hdim}{dim_H}
\DeclareMathOperator{\dist}{dist}
\newcommand{\OSC}{\mathrm{OSC}}
\newcommand{\OSCH}{\mathrm{OSC}^{\mathrm{Hom}}}
\newcommand{\Stp}[1]{\noindent\textsc{#1}.}
\begin{document}
\baselineskip 13.7pt
\title{A separation property for iterated function systems of similitudes}

\author{De-Jun FENG}
\address{
Department of Mathematics\\
The Chinese University of Hong Kong\\
Shatin,  Hong Kong
}
\email{djfeng@math.cuhk.edu.hk}

\author{Huo-Jun Ruan}
\address{
School of Mathematical Sciences\\
Zhejiang University\\
Hangzhou 310027, Zhejiang\\
People's Republic of China
}
\email{ruanhj@zju.edu.cn}

\author{Ying Xiong}
\address{
Department of Mathematics\\
South China  University of Technology\\
Guangzhou 510641,  Guangdong\\
People's Republic of China
}
\email{xiongyng@gmail.com}

\thanks {
2020 {\it Mathematics Subject Classification}: 28A78, 28A80}

\keywords{Iterated function systems,  self-similar sets, open set condition, strong separation condition}


\date{}

\begin{abstract}
Let $E$ be the attractor of an iterated function system  $\{\phi_i(x)=\rho R_ix+a_i\}_{i=1}^N$ on $\R^d$, where $0<\rho<1$, $a_i\in \R^d$ and $R_i$ are orthogonal transformations on $\R^d$. Suppose that $\{\phi_i\}_{i=1}^N$ satisfies the open set condition, but not the strong separation condition. We show that $E$ can not be generated by any  iterated function system of similitudes satisfying the strong separation condition. This gives a partial answer to a folklore question about the separation conditions on the generating  iterated function systems of self-similar sets.
  \end{abstract}

\maketitle
\section{Introduction}
\label{S1}

In this paper, we investigate the separation conditions on iterated function systems of similitudes.

By an {\it iterated function system} (IFS) on $\R^d$ we mean a finite family $\Phi=\{\phi_i\}_{i=1}^N$ of uniformly contracting mappings on $\R^d$ with $N>1$. It is well known \cite{Hut1981} that for each IFS $\Phi=\{\phi_i\}_{i=1}^N$ on $\R^d$, there is a unique non-empty compact set $E\subset \R^d$ such that $$E=\bigcup_{i=1}^N \phi_i(E).$$
We call $E$ the {\it attractor} of $\Phi$. If each map $\phi_i$ in $\Phi$ is a similitude, i.e., $\phi_i$ is of the form $$\phi_i(x)=\rho_i R_ix+a_i,$$
where $0<\rho_i<1$, $a_i\in \R^d$ and $R_i$ is an orthogonal transformation on $\R^d$, we say that $E$ is a {\it self-similar set} generated by $\Phi$. The study of IFSs and their attractors is an important subject in fractal geometry, dynamical systems and probability (see e.g.~\cite{BSS2021, BishopPeres2017,Falconer2003,Hut1981}).

One of the most fundamental conditions on  IFSs of similitudes is the {\it open set condition} (OSC), under
which the dimensions of the self-similar sets and the multifractal structure of the
self-similar measures are well understood (\cite{Falconer2003, Hut1981, Olsen1995, Patzschke1997}).
Recall that an IFS $\Phi=\{\phi_i\}_{i=1}^N$ of similitudes on $\R^d$ is said to satisfy the  open set condition if there is a non-empty open set $U\subset \R^d$ such that $\phi_i(U)$, $i=1,\ldots, N$, are disjoint subsets of $U$. Another commonly used  separation condition on IFSs is the so-called {\it strong separation condition} (SSC). Recall that $\Phi=\{\phi_i\}_{i=1}^N$ is said to satisfy  the  strong separation condition if $\phi_i(E)$, $i=1,\ldots, N$, are disjoint, where $E$  is the attractor of $\Phi$.

It is easy to see that the SSC implies the OSC. Indeed if $E$ is the attractor of an IFS $\Phi=\{\phi_i\}_{i=1}^N$ on $\R^d$ satisfying the SSC,  letting $$V_\epsilon(E)=\{x\in \R^d:\; |x-y|<\epsilon \mbox{ for some }y\in E\}$$ denote the $\epsilon$-neighborhood of $E$, then one can check that $\phi_i( V_\epsilon(E))$ ($i=1,\ldots, N$) are disjoint subset of $V_\epsilon(E)$ for all $$0<\epsilon<\min_{i\neq j}{\rm dist}(\phi_i(E),\phi_j(E)),$$
so $\Phi$ satisfies the OSC. Meanwhile there are many examples of IFSs which satisfy the OSC but not the SSC, such as the IFS $\{x/5,\; (x+3)/5,\;(x+4)/5\}$ on $\R$. There are some equivalent conditions for the OSC \cite{BandtGraf1992, Schief1994}, but usually it is difficult to check whether these conditions hold for a given IFS.  We emphasize that a self-similar set can be generated by many different IFSs of similitudes, and under mild assumptions these IFSs have a rigid algebraic structure (see, e.g., \cite{FengWang2009, EKM2010, DengLau2013, DengLau2017, Algom2020}).

In this paper, we consider the following folklore  question about the separation conditions on the generating IFSs of self-similar sets.

\begin{ques}
\label{ques-1}
Are there  two IFSs $\Phi$ and $\Psi$ of  similitudes on $\R^d$ which generate the same self-similar set, such that $\Phi$ satisfies the OSC but not the SSC, and $\Psi$  satisfies the SSC?
\end{ques}

This question was first brought to us by Ka-Sing Lau and Jun Jason Luo  around 10 years ago. As they informed us, this question was also asked  by Mariusz Urba\'{n}ski in a private communication. We remark that Question \ref{ques-1} is closely related to an open question raised by Elekes, Keleti and M\'{a}th\'{e} \cite{EKM2010}; see Section \ref{S-4}.

In this paper, we are able to provide the following partially negative answer to Question \ref{ques-1}, stating that there is no such pair $(\Phi, \Psi)$ with $\Phi$ being homogeneous.  Recall that an IFS $\{\phi_i\}_{i=1}^N$ of similitudes is said to be \emph{homogeneous} if all the maps $\phi_i$ have the same contraction ratio.

\begin{thm}\label{t:OSneSS}
	Let $E$ be the attractor of an IFS $\Phi=\{\phi_i(x)=\rho R_ix+a_i\}_{i=1}^N$ on $\R^d$, where $0<\rho<1$,  $R_i$ are orthogonal transformations on $\R^d$ and $a_i\in \R^d$. Suppose that $\Phi$ satisfies the OSC, but not the SSC. Then $E$ can not be generated by any IFS of similitudes on $\R^d$ satisfying the SSC.
\end{thm}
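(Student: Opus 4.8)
The plan is to argue by contradiction: suppose $E$ is also the attractor of an IFS $\Psi=\{\psi_j\}_{j=1}^{M}$ of similitudes, with contraction ratios $r_1,\dots,r_M\in(0,1)$, satisfying the SSC.

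First I would set up the standard measure-theoretic normalisation. Since $\Phi$ satisfies the OSC, $\dimH E=s:=\log N/\log(1/\rho)$, $0<\mathcal H^{s}(E)<\infty$, and $\mathcal H^{s}(\phi_{u}(E)\cap\phi_{v}(E))=0$ for all incomparable finite words $u,v$ over $\{1,\dots,N\}$; moreover $\mu:=\mathcal H^{s}|_{E}/\mathcal H^{s}(E)$ is the self-similar measure of $\Phi$, so $\mu(\phi_{\omega}(E))=N^{-|\omega|}$ for every finite word $\omega$. Because the SSC implies the OSC, the same facts apply to $\Psi$: $\sum_{j=1}^{M}r_{j}^{\,s}=1$, the sets $\psi_{1}(E),\dots,\psi_{M}(E)$ are pairwise disjoint, and $\mu$ is also the self-similar measure of $\Psi$ with $\mu(\psi_{j}(E))=r_{j}^{\,s}$.

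The decisive point is a rigidity input: under the OSC for the \emph{homogeneous} system $\Phi$, any IFS of similitudes generating $E$ with the OSC is subordinate to the $\Phi$-structure — concretely, there is a finite antichain $\omega^{(1)},\dots,\omega^{(M)}$ of words over $\{1,\dots,N\}$ with $\psi_{j}(E)=\phi_{\omega^{(j)}}(E)$ for every $j$; in particular $E=\bigcup_{j}\phi_{\omega^{(j)}}(E)$, and the SSC of $\Psi$ forces the sets $\phi_{\omega^{(j)}}(E)$ to be pairwise disjoint. Granting this, the proof ends quickly. As $\Phi$ fails the SSC, fix $i_{0}\neq j_{0}$ and $z\in\phi_{i_{0}}(E)\cap\phi_{j_{0}}(E)$, and choose $\Phi$-codes $\mathbf a,\mathbf b$ of $z$ with $a_{1}=i_{0}$, $b_{1}=j_{0}$. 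Since $z\in\bigcup_{j}\phi_{\omega^{(j)}}(E)$, some $\omega^{(j^{*})}$ is an initial segment of a $\Phi$-code $\mathbf c$ of $z$; as $a_{1}\neq b_{1}$, at least one of $\mathbf a,\mathbf b$ has first symbol $\neq c_{1}$, and after relabelling we may call it $\mathbf b$, so $\phi_{\omega^{(j^{*})}}(E)\subseteq\phi_{c_{1}}(E)$ while $\phi_{b_{1}\cdots b_{m}}(E)\subseteq\phi_{b_{1}}(E)$ lies in a different level-$1$ cylinder, whence $\mathcal H^{s}\big(\phi_{\omega^{(j^{*})}}(E)\cap\phi_{b_{1}\cdots b_{m}}(E)\big)=0$. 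For each $m$ the cylinder $\phi_{b_{1}\cdots b_{m}}(E)$ contains $z$ and has $\mathcal H^{s}$-measure $\rho^{ms}\mathcal H^{s}(E)>0$, so it is not contained in $\phi_{\omega^{(j^{*})}}(E)$; pick $y_{m}\in\phi_{b_{1}\cdots b_{m}}(E)\setminus\phi_{\omega^{(j^{*})}}(E)$, so that $|y_{m}-z|\le\rho^{m}\diam E\to0$. Since each $y_{m}\in E=\bigcup_{j}\phi_{\omega^{(j)}}(E)$ lies outside $\phi_{\omega^{(j^{*})}}(E)$, the pigeonhole principle gives $j'\neq j^{*}$ and a subsequence with $y_{m_{k}}\in\phi_{\omega^{(j')}}(E)$; this set is closed and $y_{m_{k}}\to z$, so $z\in\phi_{\omega^{(j')}}(E)$. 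Then $z\in\phi_{\omega^{(j^{*})}}(E)\cap\phi_{\omega^{(j')}}(E)=\psi_{j^{*}}(E)\cap\psi_{j'}(E)$ with $j^{*}\neq j'$, contradicting the SSC of $\Psi$.

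The main obstacle is the rigidity input, and it is genuinely the heart of the matter. It cannot hold verbatim for all self-similar sets — for instance $[0,1]$ is generated by homogeneous OSC systems with logarithmically incommensurable ratios — so the argument must be organised around a dichotomy on the algebraic/geometric type of $E$. In the ``generic'' case one has to prove the subordination above; this is where the homogeneity of $\Phi$ enters essentially, to compare $\Phi$- and $\Psi$-cylinders at matched scales and to identify the maps $\psi_{j}$, and it is closely tied to the rigidity phenomena for generating IFSs studied by Feng--Wang, Elekes--Keleti--M\'ath\'e and Deng--Lau. In the remaining cases the extra exact-similarity structure of $E$ can only occur when $E$ is connected (in particular if it contains a Euclidean cell), in which case it is patently not an SSC attractor, or when $E$ is supported on a proper affine subspace $V\subsetneq\R^{d}$, in which case $\Phi$ restricts to a homogeneous OSC system on $V$ with the same attractor (the OSC being equivalent to $\mathcal H^{s}(E)>0$, hence intrinsic), and one concludes by induction on the ambient dimension $d$. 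Establishing this dichotomy — above all the subordination in the generic case — is the substantive work that remains.
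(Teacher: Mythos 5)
Your argument hinges on an unproved ``rigidity input'': that every OSC generating IFS $\Psi$ of $E$ is subordinate to the $\Phi$-structure, i.e.\ each $\psi_j(E)$ equals an elementary piece $\phi_{\omega^{(j)}}(E)$. You acknowledge this yourself (``the substantive work that remains''), and this is precisely where the proposal fails: that subordination statement is not a known lemma one can cite, it is of the same nature as --- and arguably harder than --- the theorem being proved. It would in particular force logarithmic commensurability of the contraction ratios of $\Psi$ with $\rho$ and an exact identification of the maps $\psi_j$ with compositions from $\Phi$; the known rigidity results you invoke (Feng--Wang, Deng--Lau, Elekes--Keleti--M\'ath\'e) all require separation hypotheses (typically the SSC) that $\Phi$ does not satisfy here, and the closely related statement for SSC systems is exactly Question~9.3 of Elekes--Keleti--M\'ath\'e, which the paper points out is still open. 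Your fallback ``dichotomy'' (either subordination holds, or $E$ is connected, or $E$ lies in a proper affine subspace and one inducts on $d$) is asserted, not proved, and there is no reason a totally disconnected $E$ spanning $\R^d$ must admit only subordinate OSC generating systems. The closing deduction you give from the rigidity input is fine, but the proof as a whole has a genuine gap at its center.

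For contrast, the paper never proves (or needs) any such structure theorem. It assigns to each OSC generating IFS $\Phi'$ of $E$ a finitely supported probability vector $\gamma(\Phi')$ recording, with $\mathcal H^s$-weights, the sizes of the connected components of the intersection graph of the pieces $\phi'_i(E)$, and orders these vectors by a reverse-lexicographic total order. The key lemma is that composing on the left with $\Phi$ (which fails the SSC) strictly increases this vector: $\gamma(\Psi')\prec\gamma(\Phi\circ\Psi')$. Then, using only the discreteness result of Elekes--Keleti--M\'ath\'e (the set of similitudes $\phi$ with $\phi(E)\supset E$ is discrete, hence the family of homogeneous OSC generating IFSs with ratio in $[\rho\epsilon,\epsilon)$ is finite), one picks a maximal $\Theta$ in this finite family, splits it along the SSC pieces $\psi_j(E)$ into systems $\Gamma_j=\{\psi_j^{-1}\circ\theta_i\}$, notes $\gamma(\Theta)=\sum_j r_j^s\gamma(\Gamma_j)$, and pushes each $\Gamma_j$ back into the family by iterating $\Phi$; the strict monotonicity then contradicts maximality. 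If you want to salvage your approach, you would have to either prove your subordination claim (currently out of reach) or replace it with a mechanism, like the characteristic-vector monotonicity, that extracts a contradiction without identifying $\Psi$ inside the $\Phi$-coding at all.
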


The proof of the above theorem is a little bit delicate. For the convenience of the readers, we illustrate some rough ideas.  Suppose on the contrary that $E$ is also generated by another
IFS $\Psi=\{\psi_i\}_{j=1}^M$ of similitudes satisfying the SSC. We may assume that $r_j<\rho$ for all $j$, where $r_j$ is the contraction ratio of $\psi_j$.  For simplicity we assume that $E$ is not contained in any hyperplane of $\R^d$. Choose a sufficiently small  $\epsilon>0$ and let $\mathcal I$ denote the collection of all homogeneous generating IFSs (of similitudes on $\R^d$) of $E$ with contraction ratios lying in the interval $[\rho \epsilon, \epsilon)$. Applying a result of Elekes et al.~\cite{EKM2010} we see that $\mathcal I$ is finite (see Lemma \ref{l:finite}). For each $\Theta=\{\theta_i\}_{i\in \mathcal A}\in \mathcal I$, we can
construct $M$ homogeneous generating IFSs $\Gamma_1$, \ldots, $\Gamma_M$ of $E$ by $\Gamma_j=\{\psi_j^{-1}\circ \theta_i:\; i\in \mathcal A_j\},$ where $\mathcal A_j:=\{i\in \mathcal
A:\; \theta_i(E)\subset \psi_j(E)\}$. For each $j$ we can show that there exists a positive integer $k_j$ such that $\Phi^{k_j}\circ \Gamma_j\in \mathcal I$, where $\Phi^{k_j}$ denotes the $k_j$-th iteration of $\Phi$.  To derive a contradiction, we assign an infinite dimensional probability vector $\gamma(\Phi')$ (with finitely many non-zero entries) to each generating IFS $\Phi'$ of $E$ satisfying the OSC;  see Section \ref{S-2.1}. The term $\gamma(\Phi')$, which is called the characteristic vector of $\Phi'$,  gives a quantitative description of the intersections between the images of $E$ under the mappings in $\Phi'$. Moreover we can introduce a total order relation $\preceq$ on the collection of all such vectors.  A key observation is that
\begin{equation}
\label{e-key}\gamma(\Phi')\prec \gamma(\Phi\circ \Phi')
\end{equation}
for each generating IFS $\Phi'$ of $E$ satisfying the OSC (see Lemma \ref{l:inc}). Now return back to the aforementioned IFSs $\Theta$ and $\Gamma_j$ ($j=1,\ldots, M$). From the definition of characteristic vector, we obtain an identity $\gamma(\Theta)=\sum_{j=1}^M r_j^s\gamma(\Gamma_j)$, where $s$ is the Hausdorff dimension of $E$ which satisfies  $\sum_{j=1}^M r_j^s=1$. As $\mathcal I$ is finite, we may choose $\Theta\in \mathcal I$ so that $\gamma(\Theta)$ is the largest in the sense that $\gamma(\Theta')\preceq \gamma(\Theta)$ for all $\Theta'\in \mathcal I$. Then by \eqref{e-key},
$$\gamma(\Theta)=\sum_{j=1}^M r_j^s\gamma(\Gamma_j)\prec \sum_{j=1}^M r_j^s\gamma(\Phi^{k_j}\circ \Gamma_j)\preceq \sum_{j=1}^M r_j^s\gamma(\Theta)=\gamma(\Theta),$$
leading to a contradiction $\gamma(\Theta)\prec \gamma(\Theta)$.

The paper is organized as follows. In Section \ref{sc:pre} we introduce the definition of characteristic vector for each  IFS of similitudes, and prove Lemma \ref{l:inc}. In Section \ref{sc:proof} we prove Theorem \ref{t:OSneSS}. In Section \ref{S-4} we give a final remark.

\section{Characteristic vectors of IFSs and a key property}\label{sc:pre}

\subsection{Characteristic vectors of IFSs of similitudes}\label{S-2.1}
In this subsection, we will assign an infinite-dimensional probability vector to each IFS of similitudes on $\R^d$.

To begin with, let $\Phi=\{\phi_i(x)=\rho_iR_ix+a_i\}_{i=1}^N$ be an IFS of similitudes on $\R^d$  and let $E=E_\Phi$ be the attractor of $\Phi$.  We introduce a binary relation $\sim_\Phi$ on the set $V:=V_\Phi=\{1,\ldots, N\}$ by
$$\mbox{$i\sim_\Phi j$ \; if  $\phi_i(E)\cap \phi_j(E)\neq \emptyset$}.
$$
 We say that $i$ is {\it adjacent} to $j$ if $i\sim_\Phi j$. Moreover, we say that $i$ is {\it connected} to $j$, if there exist $x_1,\ldots,x_n\in \{1,\ldots, N\}$ with $x_1=i$ and $x_n=j$ such that $x_k$ is adjacent to $x_{k+1}$ for  $k=1,\ldots, n-1$. A subset $V'$ of $V$ is said to be {\it connected} if each two elements of $V'$ are connected to each other.

Clearly,  $V$ can be written as the union of a number of disjoint maximal connected subsets, each of them is called a {\it connected component} of $V$.  Moreover a connected component of $V$ with cardinality $n$ is called an {\it $n$-component} of $V$.

It is direct to check that  $\Lambda\subset V$ is an $n$-component of $V$ if and only if the following two properties hold:

\begin{itemize}
\item[(i)] $C_\Lambda\cap C_{V\setminus\Lambda}=\emptyset$, where $C_\Lambda:=\bigcup_{i\in\Lambda}\phi_i(E)$.
\item[(ii)] $C_{\Lambda'}\cap C_{V\setminus\Lambda'}\ne\emptyset$
for every nonempty proper subset $\Lambda'$ of $\Lambda$.
\end{itemize}

Let $s$ denote the similarity dimension of $\Phi$, i.e., $s$ is the unique number so that  $\sum_{i=1}^N\rho_i^s=1$. We define an infinite dimensional vector $\gamma(\Phi)=(\gamma^\Phi_n)_{n=1}^\infty$ by
\[ \gamma^\Phi_n=\sum_{\Lambda\colon \text{\rm $\Lambda$ is a $n$-component of V}}\sum_{i\in \Lambda}\rho^s_i,\]
where we adopt the convention that $\gamma^\Phi_n=0$ if $V$ has no any $n$-component. Clearly $\gamma^\Phi_n=0$ for all $n>N$ and $\sum_{n=1}^\infty\gamma^\Phi_n=1$.

\begin{de}
We call $\gamma(\Phi)=(\gamma^\Phi_n)_{n=1}^\infty$ the characteristic vector of $\Phi$.
\end{de}

Below we give some simple examples to illustrate the characteristic vector $\gamma(\Phi)$ of $\Phi$.

\begin{ex}
	If $\Phi$ satisfies the SSC, then $\gamma(\Phi)=(1,0,\ldots)$.
\end{ex}
\begin{ex}
	Suppose that the attractor $E=E_\Phi$ of $\Phi=\{\phi_i\}_{i=1}^N$  is connected, then $\gamma^\Phi_N=1$ and $\gamma^\Phi_n=0$ for all $n\neq N$.
\end{ex}
\begin{ex}\label{e:145}
	Let  $\Phi=\{\phi_i\}_{i=1}^3$ be an IFS on $\R$ defined by $$\phi_1(x)=\frac{x}{5},\quad \phi_2(x)=\frac{x+3}{5},\quad \phi_3=\frac{x+4}{5}.$$ Then
	$ \gamma(\Phi)=(1/3,2/3,0,\ldots)$.
\end{ex}

\subsection{A key property of characteristic vectors}
In order to compare two characteristic vectors of IFSs, we write
$$\Omega=\left\{(x_n)_{n=1}^\infty\in \R^{\N}:\; \mbox{$x_n\neq 0$ for at most finitely many $n$}\right\}.
$$
Clearly, $\Omega$ is a vector space over $\R$ and $\gamma(\Phi)\in \Omega$ for each IFS $\Phi$ of similitudes on $\R^d$.
Now we introduce a relation $\prec$ on $\Omega$ by
$$
x\prec y\;\;\mbox{if there exists }m\geq 1\mbox{ such that }x_m<y_m \mbox{ and $x_n=y_n$ for all $n> m$},
$$
where $x=(x_n)_{n=1}^\infty$ and $y=(y_n)_{n=1}^\infty$.
We write $x\preceq y$ if $x\prec y$ or $x=y$. Clearly $\Omega$ is totally ordered in the sense that for any $x,y\in\Omega$, we have either $x=y$, or $x\prec y$, or $y\prec x$. Moreover, if $x\preceq y$ then $ax\preceq ay$ for all $a>0$; and if $x\preceq y$, $u\preceq v$ then $x+u\preceq y+v$.

For two IFSs~$\Phi=\{\phi_i\}_{i=1}^N$ and~$\Psi=\{\psi_j\}_{j=1}^M$ on $\R^d$, the composition of~$\Phi$ and~$\Psi$ is a new IFS on $\R^d$ given by
\[ \Phi\circ\Psi=\{\phi_i\circ\psi_j\colon 1\leq i\leq N,\;1\leq j\in M\}. \]
We begin with a simple lemma.
\begin{lem}\label{l:inc1}
	Let $\Phi=\{\phi_i\}_{i=1}^N$ and~$\Psi=\{\psi_j\}_{j=1}^M$ be two IFSs of similitudes on $\R^d$ satisfying the OSC. Suppose that $\Phi$ and $\Psi$ generate the same self-similar set~$E$, i.e., $E_\Phi=E_\Psi=E$. Then $\Phi\circ\Psi$ is also a generating IFS of $E$ satisfying the OSC.
\end{lem}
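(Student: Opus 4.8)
The plan is to reduce the statement to Schief's characterization of the open set condition \cite{Schief1994}: for any IFS $\Theta$ of similitudes on $\R^d$ with attractor $K$ and similarity dimension $t$, one always has $\mathcal{H}^t(K)\le(\diam K)^t<\infty$ (cover $K$ by the level-$n$ cylinder sets of $\Theta$ and let $n\to\infty$), and $\Theta$ satisfies the OSC if and only if $\mathcal{H}^t(K)>0$. First I would record that $\Phi\circ\Psi$ is indeed an IFS of similitudes — it has $NM>1$ maps, with contraction ratios $\rho_i r_j\in(0,1)$, where $\rho_i$ and $r_j$ are the contraction ratios of $\phi_i$ and $\psi_j$ — and that its attractor is $E$: substituting $E=\bigcup_{j=1}^M\psi_j(E)$ into $E=\bigcup_{i=1}^N\phi_i(E)$ gives $E=\bigcup_{i,j}\phi_i\circ\psi_j(E)$, so $E=E_{\Phi\circ\Psi}$ by the uniqueness of the attractor.

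Next I would pin down the similarity dimensions. Let $s$ and $s'$ denote the similarity dimensions of $\Phi$ and $\Psi$. Since $\Phi$ satisfies the OSC, Schief's theorem gives $\mathcal{H}^s(E)>0$, and together with $\mathcal{H}^s(E)<\infty$ this forces $\dimH E=s$; by the same reasoning applied to $\Psi$ we get $\dimH E=s'$, hence $s=s'$. Now the similarity dimension $t$ of $\Phi\circ\Psi$ is the unique solution of $\sum_{i,j}(\rho_i r_j)^t=1$, i.e.\ of $\bigl(\sum_{i=1}^N\rho_i^t\bigr)\bigl(\sum_{j=1}^M r_j^t\bigr)=1$; since $\sum_{i=1}^N\rho_i^s=\sum_{j=1}^M r_j^s=1$, the value $t=s$ solves it, so $t=s$. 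Applying the converse direction of Schief's theorem to $\Phi\circ\Psi$ — whose attractor $E$ satisfies $\mathcal{H}^s(E)>0$ and whose similarity dimension equals $s$ — we conclude that $\Phi\circ\Psi$ satisfies the OSC, which proves the lemma.

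I expect no serious obstacle once Schief's criterion is available; the one point that needs care is the equality $s=s'$ of the two similarity dimensions, and it is precisely there that the OSC hypothesis is used for \emph{both} $\Phi$ and $\Psi$. After that, the factorization $\sum_{i,j}(\rho_i r_j)^t=\bigl(\sum_i\rho_i^t\bigr)\bigl(\sum_j r_j^t\bigr)$ makes the similarity dimension of the composition transparent, and the measure-theoretic characterization does the rest. A more hands-on alternative would be to try to construct directly a single open set witnessing the OSC for $\Phi\circ\Psi$ out of open sets for $\Phi$ and for $\Psi$; since those two open sets are a priori unrelated, I would not expect that route to be shorter.
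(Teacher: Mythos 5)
Your proposal is correct and follows essentially the same route as the paper: verify that the attractor of $\Phi\circ\Psi$ is $E$ by substitution, note that all three IFSs share the similarity dimension $s=\dimH E$ and that $\mathcal H^s(E)>0$ by the OSC, and then invoke Schief's criterion ($\mathcal H^s(E)>0$ implies OSC) for the composition. The only difference is that you spell out the factorization $\sum_{i,j}(\rho_i r_j)^t=\bigl(\sum_i\rho_i^t\bigr)\bigl(\sum_j r_j^t\bigr)$ and the equality of the two similarity dimensions, which the paper leaves implicit; also note that the implication ``OSC $\Rightarrow \mathcal H^s(E)>0$'' is Hutchinson's, while Schief's theorem supplies the converse you actually need.
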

\begin{proof}
Since $E_\Phi=E_\Psi=E$,  we have $E=\bigcup_{i=1}^N\phi_i(E)=\bigcup_{j=1}^M\psi_i(E)$. It follows that
$$
\bigcup_{i=1}^N\bigcup_{j=1}^M\phi_i(\psi_j(E))=\bigcup_{i=1}^N\phi_i\left(\bigcup_{j=1}^M\psi_j(E)\right)
=\bigcup_{i=1}^N\phi_i(E)=E,
$$
which implies $E_{\Phi\circ \Psi}=E$.

To see that $\Phi\circ \Psi$ satisfies the OSC, let $s$ be the Hausdorff dimension of $E$. Clearly,  $\Phi$ and $\Psi$ have the same similarity dimension $s$ and $\mathcal H^s(E)>0$, where $\mathcal H^s$ stands for the $s$-dimensional Hausdorff measure; see \cite{Falconer2003, Hut1981}. It follows that  $\Phi\circ \Psi$ also has the similarity dimension $s$. Since $\mathcal H^s(E)>0$, by \cite[Theorem 2.1]{Schief1994} $\Phi\circ \Psi$ satisfies the OSC.
\end{proof}

The following lemma plays a key role in the proof of Theorem \ref{t:OSneSS}.
\begin{lem}\label{l:inc}
	Let $\Phi=\{\phi_i\}_{i=1}^N$ and~$\Psi=\{\psi_j\}_{j=1}^M$ be two IFSs of similitudes on $\R^d$ satisfying the OSC. Suppose that $\Phi$ and $\Psi$ generate the same self-similar set~$E$.   Moreover, suppose that $\Phi$ does not satisfy the SSC.  Then $\gamma(\Psi)\prec\gamma(\Phi\circ\Psi)$.
\end{lem}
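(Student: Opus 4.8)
The plan is to exploit the fibered structure of the index set $V_{\Phi\circ\Psi}=\{(i,j):1\le i\le N,\ 1\le j\le M\}$ of $\Phi\circ\Psi$ over $V_\Phi$ and to compare $\gamma(\Psi)$ with $\gamma(\Phi\circ\Psi)$ super-block by super-block. I would first record the setup. By Lemma~\ref{l:inc1}, $\Phi\circ\Psi$ generates $E$ and satisfies the OSC, and (see the proof of that lemma) $\Phi$, $\Psi$ and $\Phi\circ\Psi$ share a common similarity dimension $s$, so $\sum_i\rho_i^s=\sum_jr_j^s=\sum_{i,j}(\rho_ir_j)^s=1$; accordingly the map $\phi_i\circ\psi_j$ carries weight $(\rho_ir_j)^s$ in the definition of $\gamma(\Phi\circ\Psi)$, and for a subset $S\subseteq V_{\Phi\circ\Psi}$ I write $w(S)$ for the sum of these weights over $S$. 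Since $\psi_j(E)\subseteq E$ and $\phi_i(E)\subseteq E$, any adjacency $(i,j)\sim_{\Phi\circ\Psi}(i',j')$ forces $\phi_i(E)\cap\phi_{i'}(E)\ne\emptyset$, i.e.\ $i=i'$ or $i\sim_\Phi i'$; projecting connecting paths, every connected component $C$ of $V_{\Phi\circ\Psi}$ projects into a single connected component $\Lambda$ of $V_\Phi$, hence $C\subseteq\widetilde\Lambda:=\Lambda\times V_\Psi$. So the super-blocks $\widetilde\Lambda$, as $\Lambda$ runs over the components of $V_\Phi$, partition $V_{\Phi\circ\Psi}$ into unions of components; letting $\gamma^{(\Lambda)}\in\Omega$ be the vector whose $n$-th entry is the total weight of the size-$n$ components contained in $\widetilde\Lambda$, we get $\gamma(\Phi\circ\Psi)=\sum_\Lambda\gamma^{(\Lambda)}$. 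Setting $m_\Lambda:=\sum_{i\in\Lambda}\rho_i^s$ and noting $\sum_\Lambda m_\Lambda=1$, we also have $\gamma(\Psi)=\sum_\Lambda m_\Lambda\,\gamma(\Psi)$, so it suffices to compare $m_\Lambda\gamma(\Psi)$ with $\gamma^{(\Lambda)}$ for each $\Lambda$.

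Next I would show $m_\Lambda\gamma(\Psi)\preceq\gamma^{(\Lambda)}$ for every component $\Lambda$ of $V_\Phi$. The crucial observation is that $(i,j)$ and $(i,j')$ lie in the same component of $V_{\Phi\circ\Psi}$ whenever $j,j'$ lie in the same component of $V_\Psi$ --- just lift a connecting path in $V_\Psi$ by $\phi_i$. Hence if $j$ lies in a component of $V_\Psi$ of size $\ge k$, then $(i,j)$ lies in a component of $V_{\Phi\circ\Psi}$ of size $\ge k$. Writing $W_k\subseteq V_\Psi$ for the union of the components of $V_\Psi$ of size $\ge k$, this gives the inclusion $\Lambda\times W_k\subseteq S_k$, where $S_k:=\{(i,j)\in\widetilde\Lambda:\ (i,j)\text{ lies in a component of size }\ge k\}$; comparing weights (and using $\sum_jr_j^s=1$) yields
\[ \sum_{n\ge k}\bigl(m_\Lambda\gamma(\Psi)\bigr)_n \;=\; m_\Lambda\sum_{j\in W_k}r_j^s \;=\; w(\Lambda\times W_k)\;\le\; w(S_k)\;=\;\sum_{n\ge k}\gamma^{(\Lambda)}_n \qquad(k\ge1). \]
Combined with the elementary fact that if $x\ne y$ are nonnegative vectors in $\Omega$ with $\sum_{n\ge k}x_n\le\sum_{n\ge k}y_n$ for all $k$ then $x\prec y$ (look at the largest index where $x,y$ differ), this gives $m_\Lambda\gamma(\Psi)\preceq\gamma^{(\Lambda)}$; summing over $\Lambda$ and using monotonicity of $\preceq$ under addition gives $\gamma(\Psi)\preceq\gamma(\Phi\circ\Psi)$.

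To obtain the strict inequality I would show $m_\Lambda\gamma(\Psi)\prec\gamma^{(\Lambda)}$ for some $\Lambda$, namely one with $|\Lambda|\ge2$ --- such a $\Lambda$ exists precisely because $\Phi$ fails the SSC. If instead $m_\Lambda\gamma(\Psi)=\gamma^{(\Lambda)}$, then the displayed inequality is an equality for every $k$, and since all weights $(\rho_ir_j)^s$ are strictly positive, the inclusion $\Lambda\times W_k\subseteq S_k$ is forced to be an equality for every $k$; this says exactly that the component of each $(i,j)\in\widetilde\Lambda$ coincides with the fiber piece $\{i\}\times(\text{component of }j\text{ in }V_\Psi)$, so there is no adjacency $(i,j)\sim_{\Phi\circ\Psi}(i',j')$ with $i\ne i'$ inside $\widetilde\Lambda$. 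But choosing $i\ne i'$ in $\Lambda$ with $\phi_i(E)\cap\phi_{i'}(E)\ne\emptyset$ and using $\phi_i(E)=\bigcup_j\phi_i\psi_j(E)$ and $\phi_{i'}(E)=\bigcup_{j'}\phi_{i'}\psi_{j'}(E)$, we get $j,j'$ with $\phi_i\psi_j(E)\cap\phi_{i'}\psi_{j'}(E)\ne\emptyset$, i.e.\ a cross-fiber adjacency --- a contradiction. Hence $m_\Lambda\gamma(\Psi)\prec\gamma^{(\Lambda)}$ for this $\Lambda$, and since $x\prec y$ together with $u\preceq v$ implies $x+u\prec y+v$, summing over $\Lambda$ yields $\gamma(\Psi)\prec\gamma(\Phi\circ\Psi)$.

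I expect the main difficulty to be organizational rather than computational: keeping track of how the components of $V_{\Phi\circ\Psi}$ are distributed among the super-blocks $\widetilde\Lambda$, and casting the comparison in the right ``tail-sum'' form so that the total order $\prec$ can be applied termwise. No hard estimate seems to be needed --- the whole argument rests on the two soft facts that fiber-wise connectedness persists in $V_{\Phi\circ\Psi}$ and that a component of $V_\Phi$ of size $\ge2$ must create a strictly larger (in the $\prec$ ordering) block upstairs.
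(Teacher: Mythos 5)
Your proof is correct, and although it rests on the same two soft facts as the paper's argument --- that each fiber piece $\{i\}\times\Lambda$, $\Lambda$ a component of $V_\Psi$, remains connected in $V_{\Phi\circ\Psi}$, and that failure of the SSC for $\Phi$ forces some cross-fiber merging --- the bookkeeping is genuinely different. The paper stays with the components of $V_\Psi$ only: it chooses the largest $n_0$ for which some fiber piece $\{i_0\}\times\Lambda_0$ fails to be a component of $V_{\Phi\circ\Psi}$, so that the offending piece is swallowed by a component of some size $n_1>n_0$, and then compares the two characteristic vectors coordinatewise for $n\ge n_1$, with strictness at $n_1$ (displays \eqref{eq:k+1>=k} and \eqref{eq:k+1>k}); this is tailored exactly to the definition of $\prec$ via the top index where the vectors differ. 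You instead partition $V_{\Phi\circ\Psi}$ into super-blocks $\Lambda\times V_\Psi$ over the components $\Lambda$ of $V_\Phi$ (a structural observation the paper does not use), prove inside each block the tail-sum dominance $\sum_{n\ge k}\bigl(m_\Lambda\gamma(\Psi)\bigr)_n\le\sum_{n\ge k}\gamma^{(\Lambda)}_n$ for every $k$, and convert dominance plus non-equality into $\prec$ by the largest-differing-index remark; strictness is then localized to any block with $|\Lambda|\ge2$, where equality would force every component to be a single fiber piece, contradicting the cross-fiber adjacency produced by $\phi_i(E)\cap\phi_{i'}(E)\ne\emptyset$. Your route buys a stronger and more structured intermediate statement (a blockwise stochastic dominance of the component-size distributions) and avoids the extremal choice of $n_0$, at the cost of more notation; the paper's route is shorter precisely because it skips the block decomposition. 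Two details worth making explicit in a write-up: pick the block $\Lambda$ of size at least two to be the component of $V_\Phi$ containing an adjacent pair $i\ne i'$ witnessing the failure of the SSC, so the cross-fiber adjacency argument applies verbatim; and note that passing from equality of weights to equality of the sets $\Lambda\times W_k=S_k$ uses the strict positivity of the weights $(\rho_i r_j)^s$ --- which you did flag, and which is the one place positivity is genuinely needed.
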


\begin{proof} By Lemma \ref{l:inc1}, $\Phi\circ \Psi$ is a generating IFS of $E$ satisfying the OSC.
Write $$V_\Psi=\{1,\ldots, M\} \mbox { and }V_{\Phi\circ \Psi}=\{(i,j):\; i=1,\ldots, N,\; j=1,\ldots, M\}.$$

Define the binary relations $\sim_\Psi$ and $\sim_{\Phi\circ \Psi}$  on $V_\Psi$ and  $V_{\Phi\circ \Psi}$ respectively as in Section \ref{S-2.1}.
For $n\geq 1$, let  $\mathcal C_n$ denote  the collection of $n$-components of $V_\Psi$ with respect to $\sim_\Psi$, and $\mathcal D_n$ the collection of $n$-components of $V_{\Phi\circ \Psi}$ with respect to $\sim_{\Phi\circ \Psi}$; see Section \ref{S-2.1} for the relevant definitions.
	
Let us first give a simple observation. 	Suppose that two elements $j,k\in V_\Psi$ are connected to each other, i.e., there exist $j_1,\ldots, j_s\in V_\Psi$, with $j_1=j$ and $j_s=k$,  such that $\psi_{j_\ell}(E)\cap \psi_{j_{\ell+1}}(E)\neq \emptyset$ for all $\ell=1,\ldots, s-1$. Clearly for each $i\in \{1,\ldots, N\}$, $\phi_i\circ \psi_{j_\ell}(E)\cap \phi_i\circ \psi_{j_{\ell+1}}(E)\neq \emptyset$ for all $\ell=1,\ldots, s-1$; therefore the elements $(i,j)$ and $(i, k)$ in $V_{\Phi\circ \Psi}$ are connected to each other.

According to the above observation, for every $n$-component~$\Lambda$ of~$V_\Psi$ and $i\in \{1,\ldots, N\}$, the set $\{i\}\times  \Lambda$ is connected with respect to $\sim_{\Phi\circ \Psi}$, hence it is either an $n$-component of~$V_{\Phi\circ\Psi}$ or a proper subset of  an $n_1$-component of~$V_{\Phi\circ\Psi}$ for some  $n_1>n$.

We claim that for some $n\in \N$,  there are an $n$-component $\Lambda$ of~$V_\Psi$ and $i\in \{1,\ldots, N\}$ such that the set $\{i\}\times  \Lambda$ is not an $n$-component of~$V_{\Phi\circ\Psi}$. To prove this claim we use contradiction. Suppose on the contrary that the claim is false. Then for each $i\in \{1,\ldots, N\}$, $n\in \N$ and $\Lambda\in \mathcal C_n$,  we have $\{i\}\times  \Lambda \in \mathcal D_n$, which implies that  for each $1\leq i'\leq N$ with $i'\neq i$,
	\[ \phi_i\left(\bigcup_{j\in \Lambda}\psi_j(E)\right)\cap\phi_{i'}(E)=\phi_i\left(\bigcup_{j\in \Lambda}\psi_j(E)\right)\cap \phi_{i'}\left(\bigcup_{k=1}^M\psi_k(E)\right)=\emptyset. \]
Taking the union over all connected components of $V_\Psi$ yields that for $i,i'\in \{1,\ldots, N\}$ with $i\neq i'$,
	\[ \phi_i(E)\cap\phi_{i'}(E)=\phi_i\left(\bigcup_{\Lambda\in \bigcup_{n\ge1}\mathcal C_n}\bigcup_{j\in \Lambda}\psi_j(E)\right)\cap \phi_{i'}(E)=\emptyset, \]
	which contradicts the assumption that  $\Phi=\{\phi_i\}_{i=1}^N$ does not satisfy the SSC. This completes the proof of the claim.
	
	Let $n_0$ be the largest integer such that there exist an $n_0$-component  $\Lambda_0$ of $V_\Psi$ and $i_0\in \{1,\ldots N\}$ such that the set
	$\{i_0\}\times  \Lambda_0$ is not an $n_0$-component of $V_{\Phi\circ \Psi}$.  Then as we pointed out above, $\{i_0\}\times  \Lambda_0$ is a proper subset of an $n_1$-component $D_0$ of $V_{\Phi\circ \Psi}$ for some $n_1>n_0$.  Clearly $D_0$ is not of the form $\{i\}\times \Lambda$ with $1\leq i\leq N$ and $\Lambda\in \mathcal C_{n_1}$.
	
	Now for $i\in \{1,\ldots, N\}$ and $j\in \{1,\ldots, M\}$, let $\rho_i$ and  $r_j$  denote the contraction ratios of $\phi_i$ and $\psi_j$, respectively.  Let $s$ denote the Hausdorff dimension of $E$.  Then for each  $n\geq n_1$ and $\Lambda\in \mathcal C_n$, $\{i\}\times \Lambda\in \mathcal D_n$ for each $1\leq i\leq N$. It follows that  for $n\geq n_1$,
\begin{equation}\label{eq:k+1>=k}
	\gamma^{\Phi\circ\Psi}_n=\sum_{D\in\mathcal D_n}\sum_{(i, j)\in D}(\rho_ir_j)^s
	\ge\sum_{i=1}^N \sum_{\Lambda\in\mathcal C_n}\sum_{j\in \Lambda} (\rho_i r_j)^s= \sum_{\Lambda\in\mathcal C_n}\sum_{j\in \Lambda}  r_j^s=\gamma^{\Psi}_n.
\end{equation}
	Recall that $\mathcal D_{n_1}$ contains an element $D_0$ which is not of the form $\{i\}\times \Lambda$ with $1\leq i\leq N$ and $\Lambda\in \mathcal C_{n_1}$.  Hence
\begin{equation}\label{eq:k+1>k}
\begin{split}
	\gamma^{\Phi\circ\Psi}_{n_1}&=\sum_{D\in\mathcal D_{n_1}}\sum_{(i, j)\in D}(\rho_ir_j)^s\\
	&\geq \sum_{(i',j')\in D_0}(\rho_{i'}r_{j'})^s+ \sum_{i=1}^N \sum_{\Lambda\in\mathcal C_{n_1}}\sum_{j\in \Lambda} (\rho_i r_j)^s\\
		&= \sum_{(i',j')\in D_0}(\rho_{i'}r_{j'})^s+\gamma^{\Psi}_{n_1}\\
	&>\gamma^{\Psi}_{n_1}.
\end{split}
\end{equation}
	Combining~\eqref{eq:k+1>=k} and~\eqref{eq:k+1>k} yields  that $\gamma(\Psi)\prec\gamma(\Phi\circ\Psi)$.
\end{proof}


\section{The proof of Theorem~\ref{t:OSneSS}}\label{sc:proof}

In this section we prove Theorem~\ref{t:OSneSS}.
For a compact set $E\subset \R^d$,  let $\mathcal I_E$ denote the collection of all homogeneous IFSs of similitudes on $\R^d$ that generate $E$ and satisfy the OSC. Write
\begin{equation}
	\mathcal I_{E, a,b}=\{ \Phi\in \mathcal I_E:\; a\leq \rho_\Phi<b\}
\end{equation}
for $0<a<b<1$, where $\rho_\Phi$ denotes the common contraction ratio of the maps in $\Phi$.
\begin{lem}\label{l:finite}
	Let $E$ be the attractor of an IFS of similitudes on $\R^d$ satisfying the SSC. Suppose that $E$ is not contained in any hyperplane of~$\R^d$.  Then for any $0<a<b<1$, $\mathcal I_{E,a,b}$ is a finite set.
\end{lem}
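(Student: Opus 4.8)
\noindent Set $s:=\dimH E$. Since $E$ is the attractor of an IFS satisfying the SSC, hence the OSC, the classical theory of self-similar sets (\cite{Hut1981,Falconer2003}) shows that \emph{every} $\Phi\in\mathcal I_E$ has similarity dimension equal to $\dimH E=s$, and that $s>0$ since the generating IFS of $E$ has more than one map. The plan is, given $\Phi\in\mathcal I_{E,a,b}$ with $N$ maps of common contraction ratio $\rho=\rho_\Phi\in[a,b)$, first to bound $N$, then to confine the maps of $\Phi$ to one fixed finite set of similitudes.

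For the first step, the similarity-dimension identity for $\Phi$ reads $N\rho^s=1$, whence $N=\rho^{-s}\le a^{-s}$; so $N$ takes only finitely many values over $\mathcal I_{E,a,b}$. For the second step, I would observe that every map $\phi$ occurring in some $\Phi=\{\phi_i\}_{i=1}^N\in\mathcal I_{E,a,b}$ satisfies $\phi(E)\subseteq\bigcup_{i=1}^N\phi_i(E)=E$ and has contraction ratio at least $a$, so all such maps lie in the single set
\[ \mathcal S:=\bigl\{\, f\ \text{a similitude on}\ \R^d:\ f(E)\subseteq E\ \text{with contraction ratio}\ \ge a \,\bigr\}. \]
Since $\Phi$ satisfies the OSC its maps are distinct, so $\Phi$ is (as a family of maps) a subset of $\mathcal S$ of cardinality at most $a^{-s}$; consequently, once $\mathcal S$ is known to be finite, $\mathcal I_{E,a,b}$ is finite as well. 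It thus remains to prove that $\mathcal S$ is a finite set.

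This last point is where the real content lies, and it is precisely the result of Elekes, Keleti and M\'ath\'e \cite{EKM2010}: if $K\subseteq\R^d$ is a self-similar set satisfying the SSC and not contained in any hyperplane, then for every $\lambda>0$ there are only finitely many similitudes $f$ with $f(K)\subseteq K$ and contraction ratio at least $\lambda$. Applying this with $K=E$ and $\lambda=a$ gives $|\mathcal S|<\infty$, finishing the proof. I expect this to be the main obstacle --- the earlier steps are routine --- and it uses both hypotheses on $E$ essentially: the non-degeneracy of $E$ furnishes an affine basis $x_0,\dots,x_d\in E$, so that each $f\in\mathcal S$ is pinned down by the point $(f(x_0),\dots,f(x_d))$ of the compact set $E^{d+1}$, while the SSC forces distinct members of $\mathcal S$ to differ at this level by a definite amount (morally, under the SSC distinct copies $f(E),g(E)$ of comparable scale are spatially separated by a fixed fraction of that scale, here bounded below in terms of $a$), so that $\mathcal S$ embeds as a uniformly separated subset of $E^{d+1}$ and hence is finite. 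That the SSC cannot be weakened to the OSC is clear already from $K=[0,1]$, for which the corresponding set is uncountable whenever $a<1$; turning the heuristic above into a rigorous proof is exactly what is done in \cite{EKM2010}.
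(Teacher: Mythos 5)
Your proposal is correct and takes essentially the same route as the paper: reduce the lemma to finiteness of the family of similitudes relating $E$ to itself with ratio bounded away from $0$ (equivalently, via $f\mapsto f^{-1}$, maps $\phi$ with $\phi(E)\supset E$ and ratio bounded above, which is how the paper phrases it), and then invoke the rigidity result of Elekes--Keleti--M\'ath\'e. The only notable difference is bookkeeping: the paper cites \cite[Proposition~4.3(i)]{EKM2010} only for \emph{discreteness} of $\{\phi:\phi(E)\supset E\}$ and supplies the short compactness argument itself to conclude finiteness, so if the ready-made finiteness statement you attribute to \cite{EKM2010} is not stated there verbatim, you should spell out that compactness step --- which is exactly the ``compact $E^{d+1}$ plus uniform separation'' argument you sketch at the end.
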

\begin{proof}  Let ${\rm Sim}(d)$ denote the collection of all similarity maps of $\R^d$. Notice that each element $\phi$ in ${\rm Sim}(d)$ is an affine map on $\R^d$ which is of the form $\phi(x)=Ax+a$, where $A$ is a $d\times d$ matrix and $a\in \R^d$,  hence $\phi$ can be viewed as a point in $\R^{d^2+d}$.
Therefore ${\rm Sim}(d)$ can be viewed as a metric subspace of $\R^{d^2+d}$, where we endow $\R^{d^2+d}$ with the usual Euclidean metric.

For $\delta>0$, write
	\[ \mathcal S_{E, \delta}=\left\{ \phi\in  {\rm Sim}(d):\; \phi(E)\supset E \mbox{ and } \rho_\phi\leq \delta\right\}, \]
	where $\rho_\phi$ denotes the similarity ratio of $\phi$. 	To prove the conclusion of the lemma, it suffices to show that $\mathcal S_{E, \delta}$ is finite for every $\delta>0$.
	
Since  $E$ is a compact subset of $\R^d$, it follows that  $\mathcal S_{E, \delta}$ is a compact subset of  ${\rm Sim}(d)$ for each $\delta>0$.
	 Meanwhile by \cite[Proposition~4.3(i)]{EKM2010}, under the assumptions of the lemma on $E$,    $\{ \phi\in  {\rm Sim}(d):\; \phi(E)\supset E\}$ is a discrete subset of  ${\rm Sim}(d)$. Hence $\mathcal S_{E,\delta}$ is both discrete and compact, so it is finite.
\end{proof}

Now we are ready to prove Theorem \ref{t:OSneSS}.

\begin{proof}[Proof of Theorem~\ref{t:OSneSS}] We first assume that $E$ is not contained in a hyperplane of $\R^d$. To prove the theorem we use contradiction. Suppose on the contrary that $E$ can be generated by an IFS  $\Psi=\{\psi_j\}_{j=1}^M$ of similitudes on $\R^d$ which satisfies the SSC. Below we derive a contradiction.

Replacing $\Psi$ by its $n$-th iteration $\Psi^n:=\underbrace{\Psi\circ\cdots \circ \Psi}_n$ for a large $n$ if necessary, we may assume that $r_j<\rho$ for all $1\leq j\leq M$, where $r_j$ denotes the contraction ratio of $\psi_j$. Set
\begin{equation}\label{eq:dist}
	\delta:=\min_{j\ne j'}\dist\left( \psi_j(E),\psi_{j'}(E) \right).
\end{equation}
Since $\Psi$ satisfies the SSC, we have $\delta>0$.  Clearly $\delta<{\rm diam}(E)$.

Pick a large positive integer $\ell$ so that $\rho^\ell<\min_{1\leq j\leq M}r_j$. Then fix a small $\epsilon>0$ such that
\begin{equation}
\label{e-epsilon}
\epsilon\leq \frac{\rho^\ell\delta}{{\rm diam}(E)}.
\end{equation}
Clearly $\epsilon<\rho^\ell$ since $\delta<{\rm diam}(E)$. Let $\mathcal I:= \mathcal I_{E, \rho \epsilon, \epsilon}$ denote the collection of homogeneous generating IFSs $\Phi'$ on $\R^d$ of $E$ satisfying the OSC and $\rho \epsilon\leq \rho_{\Phi'}< \epsilon$. By Lemma~\ref{l:finite},  $\mathcal I$ is a finite set. Let $k$ be the unique integer so that $\rho\epsilon\leq \rho^k< \epsilon$. Then $k>\ell\geq 1$ and $\Phi^k\in \mathcal I$, so $\mathcal I$ is non-empty.  Since $\mathcal I$ is a nonempty finite set, there exists  $\Theta\in \mathcal I$ whose characteristic vector $\gamma(\Theta)$ is the largest in the sense that $\gamma(\Phi')\preceq \gamma(\Theta)$ for all $\Phi'\in \mathcal I$.

  Keep in mind that $\rho_{\Theta}\in[\rho\epsilon,\epsilon)$ since $\Theta\in \mathcal I$. Write $\Theta=\{\theta_i\}_{i\in \mathcal A}$. By \eqref{e-epsilon},
  \begin{equation}
  \label{e-t1}
  {\rm diam}(\theta_i(E))=\rho_\Theta{\rm diam}(E)<  \epsilon  {\rm diam}(E)\leq \rho^{\ell} \delta<\delta,
  \end{equation}
 and
 \begin{equation}
 \label{e-t2}
 \rho_{\Theta}<\epsilon<\rho^\ell<\min_{1\leq j\leq M}r_j.
 \end{equation}
  Since $\bigcup_{i\in \mathcal A}\theta_i(E)=E=\bigcup_{j=1}^M\psi_j(E)$, by \eqref{eq:dist} and  \eqref{e-t1} we see that if $\theta_i(E)\cap \psi_j(E)\neq \emptyset$ then $\theta_i(E)\subset \psi_j(E)$. Due to this, we can partition  $\mathcal A$  into $M$ disjoint subsets $\mathcal A_1$,\ldots, $\mathcal A_M$ by setting
  $$\mathcal A_j:= \{i\in \mathcal A\colon \theta_i(E)\subset \psi_j(E)\},\quad j=1,\ldots, M,$$
  and moreover,
  \begin{equation}
  \label{e-t3}
  \bigcup_{i\in \mathcal A_j}\theta_i(E)=\psi_j(E), \quad j=1,\ldots, M.
  \end{equation}
For $j=1,\ldots, M$, let
\[ \Gamma_{j}=\left\{ \psi_j^{-1}\circ\theta_i:\; i\in \mathcal A_j\right\}. \]
By \eqref{e-t3} and \eqref{e-t2}, for each $1\leq j\leq M$, $\Gamma_j$ is a homogeneous generating IFS of $E$ satisfying the OSC, and its contraction ratio $\rho_{\Gamma_j}$ satisfies
\begin{equation}\label{eq:ratio}
	\rho_{\Gamma_j}=r_j^{-1}\rho_{\Theta}\ge r_j^{-1}\rho\epsilon> \epsilon,
\end{equation}
where we have used $\rho_\Theta\geq \rho\epsilon$ and $r_j<\rho$ in the last two inequalities. For each $1\leq j\leq M$, let $k_j$ be the unique integer such that
$$\rho^{k_j} 	\rho_{\Gamma_j}\in [\rho\epsilon,\epsilon),$$
 then $k_j\geq 1$ by \eqref{eq:ratio}, and  $\Phi^{k_j}\circ \Gamma_j\in \mathcal I$, where $\Phi^{k_j}$ denotes the $k_j$-th iteration of $\Phi$.

Next we compare the characteristic vectors of the IFSs $\Theta$ and $\Gamma_j$ ($j=1,\ldots, M$).
Since $\Psi$ satisfies the SSC, by  \eqref{e-t3}  we see that $\theta_i(E)\cap \theta_{i'}(E)=\emptyset$ if $i\in \mathcal A_j$ and $i'\in \mathcal A_{j'}$ for some $j\neq j'$.  It follows that for  $n\in \N$, every $n$-component of $\mathcal A$ (with respect to $\sim_\Theta$) is totally contained in $\mathcal A_j$ for some $j$. Hence for each $n\in \N$,
\begin{equation}
\label{e-t5}
\begin{split}
\gamma^\Theta_n&=\sum_{\rm \Lambda:\;  \text{$\Lambda$ is an $n$-component of $\mathcal A$}}\; \sum_{i\in \Lambda}\rho_{\Theta}^s\\
&=\sum_{j=1}^M\;\sum_{\rm \Lambda:\;  \text{$\Lambda$ is an $n$-component of $\mathcal A_j$}}\; \sum_{i\in \Lambda}\rho_{\Theta}^s.
\end{split}
\end{equation}
Meanwhile by \eqref{e-t3}, it is easy to see that each $n$-component of $\mathcal A_j$ with respect to $\sim_\Theta$ is an $n$-component of  $\mathcal A_j$ with respect to $\sim_{\Gamma_j}$, and vice versa.  It follows that for $j=1,\ldots, M$,
\begin{equation}
\label{e-t6}
\gamma^{\Gamma_j}_n=\sum_{\rm \Lambda:\;  \text{$\Lambda$ is an $n$-component of $\mathcal A_j$}} \; \sum_{i\in \Lambda}(r_j^{-1}\rho_{\Theta})^s.
\end{equation}
Combining \eqref{e-t5} with \eqref{e-t6} yields that $\gamma^\Theta_n=\sum_{j=1}^M r_j^s \gamma^{\Gamma_j}_n$ for each $n\in \N$. Hence
\begin{equation}\label{eq:ave}
	\gamma(\Theta)=\sum_{j=1}^M r_j^s \gamma(\Gamma_j).
\end{equation}

Recall that for each $1\leq j\leq M$, there exists $k_j\in \N$ such that $\Phi^{k_j}\circ \Gamma_j\in \mathcal I$. This implies that $\gamma(\Phi^{k_j}\circ\Gamma_j)\preceq \gamma(\Theta)$. Since $\Phi$ satisfies the OSC but not the SSC, by Lemma \ref{l:inc} we have
$$
 \gamma(\Gamma_j)\prec \gamma(\Phi\circ \Gamma_j)\prec\cdots\prec \gamma(\Phi^{k_j}\circ \Gamma_j)\preceq \gamma(\Theta)
$$
for $j=1,\ldots, M$.  Combining this with \eqref{eq:ave} yields that $$\gamma(\Theta)\prec \sum_{j=1}^M r_j^s \gamma(\Theta)=\gamma(\Theta),$$ which leads to a contradiction. This proves the theorem under the assumption that $E$ is not contained in a hyperplane of $\R^d$.

Finally we consider the general case when $E$ may be contained in a hyperplane of $\R^d$.  Suppose on the contrary that $E$ can be generated by an IFS  $\Psi=\{\psi_j\}_{j=1}^M$ of similitudes on $\R^d$ which satisfies the SSC.  Let $H$ be the affine hull of $E$, i.e., $H$ is the affine subspace of $\R^d$ with the smallest dimension that covers $E$.  Then it is direct to check that the following properties hold:
\begin{itemize}
\item[(i)] $E$ is not contained in a proper affine subspace of $H$;
\item[(ii)]  $\phi_i(H)=H$, $\psi_j(H)=H$ for all $1\leq i\leq N$ and $1\leq j\leq M$;
\item[(iii)]  Let $\widehat{\phi}_i$, $\widehat{\psi}_j$ denote the restrictions of $\phi_i$ and $\psi_j$ on $H$. Then $\widehat{\Phi}=\{\widehat{\phi}_i\}_{i=1}^N$ and $\widehat{\Psi}=\{\widehat{\psi}_j\}_{j=1}^M$ are IFSs of similitudes on $H$ that generate $E$, and moreover, $\widehat{\Phi}$ is a homogeneous IFS satisfying the OSC, and $\widehat{\Psi}$ satisfies the SSC.
\end{itemize}
Then we can derive a contradiction by following the previous argument (in which $\Phi$, $\Psi$ and $\R^d$  are replaced by $\widehat{\Phi}$, $\widehat{\Psi}$ and $H$, respectively). This completes the proof of the theorem.
\end{proof}

\section{A final remark}\label{S-4}

We remark that Question \ref{ques-1} is closely related to the following question raised by Elekes, Keleti and M\'{a}th\'{e}.

\begin{ques}[{\cite[Question~9.3]{EKM2010}}]
\label{ques-2}
Let $E\subset \R^d$  be a self-similar set generated by an IFS $\Psi=\{\psi_j\}_{j=1}^M$ satisfying the SSC and let $f$ be a similitude
such that $f(E)\subset E$. Does this imply that $f(E)$ is a relative open set in $E$ (or in other
words $f(E)$ is a finite union of elementary pieces of $E$)?
\end{ques}

Here an elementary piece of $E$ means a set of the form $\psi_{j_1}\circ \cdots\circ \psi_{j_m}(E)$, where $j_1,\ldots, j_m\in \{1,\ldots, M\}$.
To our best knowledge, so far Question \ref{ques-2} still remains open. We remark that  an affirmative answer to Question \ref{ques-2} would yield a negative answer to Question \ref{ques-1}.   To see this, suppose that $E$ is the attractor of an IFS $\Psi=\{\psi_j\}_{j=1}^M$ of similitudes satisfying the SSC, and that the answer to Question \ref{ques-2} is affirmative.  Notice that the collection of elementary pieces of $E$ (with respect to the IFS $\Psi$) has the following net structure: for any two given elementary pieces $E_1$ and $E_2$, one has either $E_1\cap E_2=\emptyset$, or $E_1\subset E_2$, or $E_1\supset E_2$. Hence if $f(E)\subset E$ for a similitude $f$, then $f(E)$ is a finite union of disjoint elementary pieces of $E$. As a consequence, if $f_1$ and $f_2$ are two similitudes mapping $E$ into itself,  then either $f_1(E)\cap f_2(E)=\emptyset$, or $f_1(E)\cap f_2(E)$ contains an elementary piece of $E$ which implies  that $\mathcal H^s(f_1(E)\cap f_2(E))>0$, where $s$ denotes the Hausdorff dimension of $E$. Now if $\Phi=\{\phi_i\}_{i=1}^N$ is another generating IFS (of similitudes) of $E$ satisfying the OSC, then $\mathcal H^s(\phi_i(E)\cap \phi_j(E))=0$ for all $i\neq j$ (see \cite{Hut1981}) which forces that  $\phi_i(E)\cap \phi_j(E)=\emptyset$ for all $i\neq j$, that is, $\Phi$ satisfies the SSC. Therefore there is no generating IFS (of similitudes) of $E$ which satisfies the OSC but not the SSC.

\bigskip

\noindent {\bf Acknowledgements}.  Feng was partially supported by the General Research Funds (CUHK14301017, CUHK14303021)  from the Hong Kong Research Grant Council.  Ruan was partially  by  NSFC grant 11771391,  ZJNSF grant LY22A010023 and the Fundamental Research Funds for the Central Universities of China (grant 2021FZZX001-01).    Xiong was partially supported by NSFC grant 11871227, and Guangdong Basic and Applied Basic Research Foundation (project 2021A1515010056).

\end{document}